\let\mathcal\mathscr
\def\resp{\text{resp.}\kern.3em}
\let\CC\bbC
\let\NN\bbN
\let\PP\bbP
\let\RR\bbR
\let\QQ\bbQ
\let\ZZ\bbZ
\def\cf{\textit{cf.}\kern.3em}
\def\eg{\textit{e.g.}\kern.3em}
\def\ie{\textit{i.e.}\kern.3em}
\def\resp{resp.\kern.3em}
\let\leq\leqslant
\let\geq\geqslant
\let\epsilon\varepsilon
\let\oldS\S
\def\S{\oldS\kern .15em}
\newcommand{\cbbullet}{{\raisebox{1pt}{$\bbullet$}}}
\newcommand{\itwopi}{\tfrac{i}{2\pi}}
\newcommand{\rmod}{\mathrm{mod}}
\newcommand{\rN}{\textup{N}}
\newcommand{\rNM}{\textup{NM}}
\DeclareMathOperator{\CRH}{C^\infty RH}
\DeclareMathOperator{\CRHm}{C^\infty RH^\rmod}
\newcommand{\wT}{\mathsf{T}}
\newenvironment{enumeratea}
{\bgroup\begin{enumerate}}
{\end{enumerate}\egroup}
\begin{document}
\frontmatter
\title[Une remarque sur \og Thom-Sebastiani et intégrales-fibres\fg]{Une remarque sur l'article\\\og Un théorème à la ``Thom-Sebastiani'' pour les~intégrales-fibres\fg de D.~Barlet}
\author[C.~Sabbah]{Claude Sabbah}
\address{UMR 7640 du CNRS\\
Centre de Mathématiques Laurent Schwartz\\
\'Ecole polytechnique\\
F--91128 Palaiseau cedex\\
France}
\email{sabbah@math.polytechnique.fr}
\urladdr{http://www.math.polytechnique.fr/~sabbah}

\thanks{Cette recherche a été partiellement financée par le programme ANR-08-BLAN-0317-01 de l'Agence nationale de la recherche.}

\begin{abstract}
Dans cette note, nous donnons une autre démonstration du résultat démontré par D\ptbl Barlet dans \cite{Barlet08}, en nous appuyant sur la transformation de Mellin et des propriétés classiques des fonctions de Bessel (au lieu de la convolution). Le théorème principal de cette note est un peu plus précis que l'énoncé donné dans \cite{Barlet08}.
\end{abstract}

\alttitle{A remark on the article \og Un théorème à la ``Thom-Sebastiani'' pour les~intégrales-fibres\fg\ by D.~Barlet}
\begin{altabstract}
In this article, we give another proof of the result shown by D\ptbl Barlet in \cite{Barlet08}, relying on the Mellin transform and on classical properties of Bessel functions (instead of convolution). The main theorem of this note is somewhat more precise than the statement given in \cite{Barlet08}.
\end{altabstract}

\subjclass{32S25, 32S40, 32S50}

\keywords{Thom-Sebastiani, transformation de Mellin, fonction de Bessel}

\altkeywords{Thom-Sebastiani, Mellin transformation, Bessel function}

\maketitle
\mainmatter

\subsection{Introduction}
Soient $f:(\CC^m,0)\to(\CC,0)$ et $g:(\CC^n,0)\to(\CC,0)$ deux germes de fonctions holomorphes, et $f\oplus g:(\CC^{m+n},0)\to(\CC,0)$ leur somme de Thom-Sebastiani, définie par $(f\oplus g)(x,y)=f(x)+g(y)$. Si $f$ (\resp $g$) est définie sur un ouvert~$U$ (\resp $V$), fixons une forme $\varphi$ (\resp $\psi$) de degré maximum sur $U$ (\resp $V$), qui est $C^\infty$ à support compact et considérons la distribution à support compact sur $\CC$ qui, à toute fonction $C^\infty$ $\chi(s)$ associe
\[
\big\langle T_{f,\varphi},\chi\big\rangle=\frac{i}{2\pi}\int_{\CC^m}(\chi\circ f)\cdot\varphi=\int_\CC\Big(\int_{f=s}\frac{\varphi}{df\wedge d\ov f}\Big)\chi(s)\itwopi\,ds\wedge d\ov s,
\]
et idem pour $g$. C'est en particulier une distribution tempérée sur $\CC$.

Il est connu (\cf \cite{Barlet82}) que $T_{f,\varphi}$ peut s'écrire sous la forme d'une somme finie
\[
T_{f,\varphi}=\sum_{r\in{}]-1,0]}\sum_{k\geq0}\theta_{r,k}(s)|s|^{2r}(\log|s|)^k,
\]
avec $\theta_{r,k}(s)$ $C^\infty$ au voisinage de $s=0$. On s'intéressera seulement à la partie singulière de ce développement, provoquée par les singularités de $f$, c'est-à-dire qu'on travaillera modulo les germes de fonctions $C^\infty$ en $s=0$. Le problème de Thom-Sebastiani dans ce cadre consiste à exprimer le développement de $T_{f\oplus g,\varphi\wedge\psi}$ (modulo fonctions $C^\infty$) en fonction de ceux de $T_{f,\varphi}$ et $T_{g,\psi}$. Il est clair qu'une formule complète ne peut être que compliquée, mais on cherche à récupérer les termes dominants de ce développement. On introduit donc la notion de polygone de Newton, qu'on note $\rN(T/C^\infty)$, en considérant le développement ci-dessus comme un développement en série de deux variables $s$ et $\ov s$ à coefficients dans l'anneau des polynômes en $\log|s|$. Nous allons montrer la relation (où la somme des polygones est la somme de Minkowski):
\[
\rN(T_{f\oplus g,\varphi\wedge\psi}/C^\infty)=\rN(T_{f,\varphi}/C^\infty)+\rN(T_{g,\psi}/C^\infty)+(1,1).
\]

Le résultat est cependant plus précis. À tout sommet $(m'+r,m''+r)$ de $\rN(T/C^\infty)$, nous attachons un nombre complexe non nul et un entier, sous la forme d'un monôme $\wh\bC_{r,m',m''}u^\ell$ et obtenons ainsi un polygone de Newton décoré $\wh\rN(T/C^\infty)$. Le résultat principal est alors:

\begin{theoreme*}[de type \og Thom-Sebastiani\fg]
On a la relation entre polygones de Newton décorés:
\[
\wh \rN(T_{f\oplus g,\varphi\wedge\psi}/C^\infty)=\wh \rN(T_{f,\varphi}/C^\infty)+\wh \rN(T_{g,\psi}/C^\infty)+(1,1).
\]
\end{theoreme*}

Expliquons la signification de la somme de Minkowski dans ce cadre.

\begin{lemme*}
\'Etant donnés deux convexes $\rN_1$ et $\rN_2$ dans $\RR^2$ qui sont chacun enveloppe convexe d'un nombre fini de quadrants $m+(\RR_+)^2$ ($m\in\RR^2$), la somme de Minkowski $\rN_1+\rN_2$ est du même type et, pour chaque sommet $m$ de $\rN_1+\rN_2$, l'écriture $m=m_1+m_2$ avec $m_1\in \rN_1$ et $m_2\in \rN_2$ est unique, et $m_i$ est un sommet de $\rN_i$ ($i=1,2$).
\end{lemme*}

\begin{proof}
On considère deux ensembles finis $\Sigma_1,\Sigma_2\subset\RR^2$, tels que $\rN_i=\mathrm{conv}\,\big(\{m_i+(\RR_+)^2\mid m_i\in\Sigma_i\}\big)$. L'ensemble $S_i$ des sommets de $\rN_i$ est contenu dans~$\Sigma_i$. Il est clair que $\rN_1+\rN_2$ est l'enveloppe convexe des quadrants $(m_1+m_2)+(\RR_+)^2$, avec $m_i\in\Sigma_i$, et l'ensemble $S$ des sommets est donc contenu dans $\Sigma_1+\Sigma_2$.

Soit $m\in\Sigma_1+\Sigma_2$. Supposons que $m=m_1+m_2=m'_1+m'_2$ avec $m_i,m'_i\in\Sigma_i$. Posons $n=m_1-m'_1=m'_2-m_2$. Alors, pour tous $\lambda_i\in[0,1]$ ($i=1,2$), $\lambda_im_i+(1-\lambda_i)m'_i\in \rN_i$, donc $m+(\lambda_1-\lambda_2)n\in(\rN_1+\rN_2)$. Si $m'_i\neq m_i$, alors $n\neq0$ et $m$ est intérieur à un segment entièrement contenu dans $\rN$, donc $m\not\in S$.
\end{proof}

\'Etant donnés deux polygones de Newton décorés $\wt \rN_1$ et $\wt \rN_2$, il y a alors un sens à parler de la somme de Minkowski $\wt \rN_1+\wt \rN_2$ des polygones décorés: le polygone de Newton sous-jacent est $\rN_1+\rN_2$, et chaque sommet $m$ s'écrivant de manière unique comme somme d'un sommet $m_1$ de $\rN_1$ et d'un sommet $m_2$ de $\rN_2$, on peut décorer $m$ du produit des monômes correspondant à $m_1$ et $m_2$.

\subsection{Thom-Sebastiani et transformation de Fourier}\label{subsec:1}
La relation du théorème est plus simple à comprendre après transformation de Fourier entre le plan de la variable complexe $s$ et celui de la variable complexe $\sigma$. Notons $\wh T_{f,\varphi}$ la transformée de Fourier de $T_{f,\varphi}$: pour une fonction $\eta(\sigma)$ dans la classe de Schwartz (ou $C^\infty$ à support compact), on a
\begin{align*}
\big\langle\wh T_{f,\varphi},\eta\big\rangle&=\big\langle T_{f,\varphi},\wh\eta\big\rangle\\
&=\Big\langle T_{f,\varphi},\int_\CC e^{\ov{s\sigma}-s\sigma}\eta(\sigma)\itwopi\, d\sigma\wedge d\ov\sigma\Big\rangle\\
&=\int_{\CC^m\times\CC}e^{\ov{f\sigma}-f\sigma}\eta(\sigma)\itwopi\varphi\wedge d\sigma\wedge d\ov\sigma.
\end{align*}
Puisque $T_{f,\varphi}$ est à support compact, $\wh T_{f,\varphi}$ est une fonction $C^\infty$, à croissance modérée en $\sigma=\infty$, et
\[
\big\langle\wh T_{f,\varphi},\eta\big\rangle=\int\wh T_{f,\varphi}(\sigma)\eta(\sigma)\itwopi\, d\sigma\wedge d\ov\sigma\quad\text{avec }
\wh T_{f,\varphi}(\sigma)=\int_{\CC^m} e^{\ov{f\sigma}-f\sigma}\varphi.
\]

Il est clair que
\begin{equation}\label{eq:tsfourier}
\wh T_{f\oplus g,\varphi\wedge\psi}(\sigma)=\wh T_{f,\varphi}(\sigma)\cdot \wh T_{g,\psi}(\sigma).
\end{equation}
Cette remarque est à la base de beaucoup de démonstrations de formules de type Thom-Sebastiani. En définissant de manière naturelle (\cf \S\ref{subsec:3}) un polygone de Newton décoré $\wt\rN(\wh T_{f,\varphi})$, la formule \eqref{eq:tsfourier} implique de manière évidente
\begin{equation}\label{eq:tsnewton}
\wt \rN(\wh T_{f\oplus g,\varphi\wedge\psi})=\wt \rN(\wh T_{f,\varphi})+\wt \rN(\wh T_{g,\psi}).
\end{equation}

Aussi le théorème de type \og Thom-Sebastiani\fg est une conséquence immédiate de la comparaison faite à la proposition \ref{prop:NN} entre $\wt\rN(\wh T)$ et $\wh\rN(T/C^\infty)$ pour certaines distributions $T$ holonomes régulières telles que $T_{f,\varphi},T_{g,\psi}$.\qed

\subsection{Localisation et transformation de Fourier locale}\label{subsec:2}
Notons $\CRH$ l'espace des germes en $s=0$ de distributions qui sont combinaisons linéaires finies à coefficients $C^\infty$ de distributions holonomes régulières (\cf \cite{Kashiwara86}, voir aussi \cite{Bjork93}). Ce sont aussi les combinaisons linéaires finies à coefficients $C^\infty$ des distributions $|s|^{2r}(\log|s|)^\ell$, avec $\reel(r)\in{}]-1,0]$ et $\ell\in\NN$, et de leurs dérivées (holomorphes et anti-holomorphes), comme par exemple la distribution de Dirac $\delta=\itwopi\partial_s\ov\partial_s\log|s|^2$. Dans la suite, nous nous restreindrons au sous-espace (noté de la même manière) pour lequel les $r$ possibles sont réels (on a même $r\in\QQ$ pour $T_{f,\varphi}$).

Pour un tel germe $T$, nous sous-entendrons en général le choix d'un représentant, que nous verrons comme une distribution à support compact sur le plan de la variable~$s$ par multiplication par une fonction $C^\infty$ à support compact $\rho(s)$, identiquement égale à $1$ au voisinage de $s=0$ où $T$ est définie. Nous noterons alors $\rho T$ cette distribution à support compact.

Notons $\CRHm$ le quotient de $\CRH$ par l'espace des germes de distributions à support l'origine. C'est l'espace des combinaisons linéaires finies à coefficients $C^\infty$ des $|s|^{2(r-\nu)}(\log|s|)^\ell$, avec~$r,\ell$ comme ci-dessus et $\nu\in\NN$. Pour $T\in\CRH$, notons $\wt T$ son image dans $\CRHm$.

Si $S$ est dans $\CRH$, la transformée de Fourier $\wh{\rho S}$ est une fonction $C^\infty$ de la variable $\sigma$, à croissance modérée lorsque $\sigma\to\infty$. Si on modifie $\rho$, on perturbe $\wh{\rho S}$ par une fonction de la classe de Schwartz en $\sigma$, c'est-à-dire asymptotiquement plate à l'infini. De plus, on s'intéressera uniquement au germe de $\wh{\rho S}$ en $\sigma=\infty$. Ceci revient à se permettre de multiplier $\wh{\rho S}$ par une fonction $\wh\varrho(\sigma)$ qui est nulle sur un grand disque et identiquement égale à $1$ au voisinage de $\sigma=\infty$. Par transformation de Fourier inverse, on se permet de perturber $\rho S$ par une fonction de la classe de Schwartz en $s$. On peut ainsi perturber~$S$ par n'importe quel germe de fonction $C^\infty$.

Ici, nous considérons les distributions comme des fonctions généralisées, qui s'accouplent à des fonctions $C^\infty$ à support compact ou de la classe de Schwartz via le choix des formes volume $\itwopi\,ds\wedge d\ov s$ et $\itwopi\,d\sigma\wedge d\ov\sigma$. Alors, la transformation de Fourier $T\mto \wh T$ des distributions tempérées sur~$\CC$, de noyau $e^{\ov{s\sigma}-s\sigma}$ induit un isomorphisme (transformation de Fourier locale $(0,\infty)$)
\[
\left\{
\begin{array}{l}
\text{germes de distrib. en $s=0$}\\
\text{modulo fonct. $C^\infty$ en $s=0$}
\end{array}
\right\}
\longleftrightarrow
\left\{
\begin{array}{l}
\text{germes de fonct. modérées en $\sigma=\infty$}\\
\text{modulo fonct. infiniment plates en $\sigma=\infty$}\\
\end{array}
\right\}
\]
Elle a pour inverse la transformation de Fourier locale $(\infty,0)$ de noyau $e^{s\sigma-\ov{s\sigma}}$. Autrement dit, en posant $\tau=1/\sigma$ et en notant la variable en indice, on a une correspondance bijective
\[
\CRH_s/C^\infty_s\longleftrightarrow\CRH^\rmod_\tau/C^{\infty,\text{plates}}_\tau.
\]

\subsection{Polygone de Newton décoré}
Soit $T\in\CRH_s$. Supposons que $T$ s'écrive comme une somme finie
\begin{equation}\label{eq:dvpt}
T=\sum_{r\in{}]-1,0]}\sum_{k\geq0}\theta_{r,k}(s)|s|^{2r}(\log|s|)^k
\end{equation}
avec $\theta_{r,k}(s)$ $C^\infty$ au voisinage de $s=0$. Considérons $T$ modulo les germes de fonctions~$C^\infty$. On définit alors le polygone de Newton $\rN(T/C^\infty)$ comme l'enveloppe convexe des quadrants (avec $(m',m'')\in\NN^2$)
\begin{itemize}
\item
$(m'+r,m''+r)+(\RR_+)^2$ pour lesquels il existe $k\geq0$ tel que $(\partial_s^{m'}\ov\partial{}_s^{m''}\theta_{r,k})(0)\neq0$, si $r\neq0$,
\item
$(m',m'')+(\RR_+)^2$ pour lesquels il existe $k\geq0$ tel que $(\partial_s^{m'}\ov\partial{}_s^{m''}\theta_{0,k+1})(0)\neq0$, si $r=0$ (ce décalage est dû à la négligence des fonctions $C^\infty$).
\end{itemize}

En prenant le développement de Taylor des $\theta_{r,k}$, on obtient
\[
T=\sum_{r\in{}]-1,0]}\sum_{m',m''\in\NN} P_{(m'+r,m''+r)}(\log|s|)\cdot|s|^{2r}s^{m'}\ov s{}^{m''},
\]
où les $P_{(m'+r,m''+r)}$ sont des polynômes dans $\CC[u]$, et on peut raffiner la donnée du polygone $\rN(T/C^\infty)$. Appelons \emph{polygone de Newton décoré} $\wt\rN(T/C^\infty)$ de $T$ la donnée du polygone de Newton $\rN(T/C^\infty)$ et,
\begin{itemize}
\item
pour chaque sommet $(m'+r,m''+r)$ non entier, la donnée du monôme dominant $\bC_{r,m',m''}(T)u^\ell$ de $P_{(m'+r,m''+r)}$,
\item
pour chaque sommet entier $(m',m'')$, la donnée de $\bC_{0,m',m''}(T)u^{\ell-1}$, monôme dominant de $P_{(m',m'')}$ divisé par $u$ (par définition, pour un tel sommet, on a $\ell\geq1$).
\end{itemize}

Une autre décoration sera plus utile, et nous noterons $\wh\rN(T/C^\infty)$ le polygone décoré correspondant. Nous noterons de même $\wh\bC_{r,m',m''}(T)u^\ell$ (\resp $\wh\bC_{0,m',m''}(T)u^{\ell-1}$) les monômes associés au sommet $(m'+r,m''+r)$ (\resp $(m',m'')$) de $\rN(T/C^\infty)$, avec (pour $r\in{}]-1,0[$)
\begin{align}\label{eq:bCneq0}
\wh\bC_{r,m',m''}&=(-1)^{m''}\frac1\pi\, \Gamma(r+m'+1)\Gamma(r+m''+1)\sin(\pi r)\,\bC_{r,m',m''}\\
\label{eq:bC0}\wh\bC_{0,m',m''}&=(-1)^{m''+1}\,\frac\ell2\,m'!\,m''!\,\bC_{0,m',m''}
\end{align}

\subsection{Polygone de Newton et de Newton-Mellin décorés d'une distribution modérée}\label{subsec:3}
Notons $t$ une coordonnée sur $\CC$ (qui pourra être $s$ ou $\tau=1/\sigma$ suivant les cas) et considérons un germe de distribution modérée $\wT\in\CRH^\rmod_t$ (qui pourra être $\wt T$ ou $\wh T$ suivant les cas), qui est par définition une combinaison linéaire finie à coefficients $C^\infty$ en~$t$ de fonctions modérées $|t|^{2\alpha}(\log|t|)^\ell$. Pour une telle distribution modérée $\wT$, en développant
\[
\wT(t)=\sum_{r\in{}]-1,0]}\sum_{m',m''\in\ZZ} P_{(m'+r,m''+r)}(\log|t|)\cdot|t|^{2r}t^{m'}\ov t{}^{m''},
\]
où les $P_{(m'+r,m''+r)}$ sont des polynômes dans $\CC[u]$, j'associe le polyèdre $\rN(\wT)$ de $\RR^2$ enveloppe convexe des quadrants $(m'+r,m''+r)+(\RR_+)^2$, pour lesquels $P_{(m'+r,m''+r)}\neq\nobreak0$ dans $\CC[u]$. Ceci ne dépend que du germe de~$\wT$ modulo les fonctions infiniment plates.

Appelons \emph{polygone de Newton décoré} $\wt \rN(\wT)$ de $\wT$ la donnée du polygone de Newton $\rN(\wT)$ et, pour chaque sommet $(m'+r,m''+r)$, du monôme dominant de $P_{(m'+r,m''+r)}$ (c'est-à-dire le couple formé du degré $\ell$ de $P$ et du coefficient dominant).

\begin{remarque*}
Pour $T\in\CRH$, de distribution modérée associée $\wt T\in\CRHm$, les polygones $\rN(T/C^\infty)$ et $\rN(\wt T)$ ne coïncident pas nécessairement.
\end{remarque*}

Nous allons retrouver, pour $\wT$ comme ci-dessus, le polygone de Newton décoré $\wt \rN(\wT)$ de $\wT$ à partir de données des transformées de Mellin de $\wT$ (c'est-à-dire les fonctions données par les formules \eqref{eq:mellin} ci-dessous). Nous allons donc définir un polygone de Newton-Mellin décoré de $\wT$ et montrer qu'il coïncide avec $\wt \rN(\wT)$. Commençons par définir les coefficients de Mellin de $\wT$.

Soit $\wT\in\CRH^\rmod_t$. Pour tous $m',m''\in\ZZ$, choisissons $\nu\in\NN$ de sorte que $k':=m'+\nu\in\NN$ et $k'':=m''+\nu\in\NN$. Soit $\rho(t)$ une fonction de troncature comme au \S\ref{subsec:2}. Considérons la fonction
\begin{equation}\label{eq:mellin}
\cI_{\wT}^{(k',k'')}(\lambda)=\int\wT\cdot |t|^{2\lambda}t^{-k'}\ov t{}^{-k''}\rho(t)\itwopi\,dt\wedge d\ov t.
\end{equation}
Cette fonction est holomorphe en $\lambda$ pour $\lambda\gg0$ et il est classique qu'elle se prolonge en une fonction méromorphe sur le plan complexe de la variable~$\lambda$. Pour tout $r\in{}]-1,0]$ et tout $k\in\NN$, notons $c_{r,m',m'',k}(\wT)$ (appelé \emph{coefficient de Mellin}) le coefficient de $(\lambda+r-\nu+1)^{-(k+1)}$. Si on change le choix de $\rho$, on ajoute à cette fonction une fonction entière en $\lambda$, donc $c_{r,m',m'',k}(\wT)$ ne dépend pas de $\rho$ (nous utiliserons $\rho$ et~$\rho^2$ dans la démonstration de la proposition \ref{prop:pp}). Si on change $\nu$ en $\nu+1$, alors on change $k',k''$ en $k'+1,k''+1$, et on a $\cI_{\wT}^{(k'+1,k''+1)}(\lambda)=\cI_{\wT}^{(k',k'')}(\lambda-1)$, et de même on ne change pas $c_{r,m',m'',k}(\wt T)$.

Posons alors
\begin{equation}\label{eq:Cc}
C_{r,m',m'',k}(\wT)=\frac{(-2)^k}{k!}c_{r,m',m'',k}(\wT).
\end{equation}

\begin{definition*}\mbox{}
\begin{enumeratea}
\item
Le polygone de Newton-Mellin $\rNM(\wT)$ est l'enveloppe convexe des quadrants $(m'+r,m''+r)+(\RR_+)^2$ pour lesquels il existe $k\geq0$ avec $C_{r,m',m'',k}(\wT)\neq0$.
\item
Le polygone de Newton-Mellin décoré $\wt{\rNM}(\wT)$ est obtenu en décorant chaque sommet $(m'+r,m''+r)$ de $\rNM(\wT)$ par le monôme $\bC_{r,m',m''}(\wT)\cdot u^\ell$, où $\ell$ est le plus grand entier $k\geq0$ tel que $C_{r,m',m'',k}(\wT)\neq0$ et $\bC_{r,m',m''}(\wT)=C_{r,m',m'',\ell}(\wT)$.
\end{enumeratea}
\end{definition*}

\begin{lemme*}
Les polygones décorés de Newton et de Newton-Mellin associés à une fonction $\wT\in\CRH^\rmod_t$ sont égaux.
\end{lemme*}

\begin{proof}
Il suffit de le montrer pour la fonction $\wT(t)=|t|^{2r}t^{m'}\ov t{}^{m''}(\log|t|)^\ell$. Alors, pour $k',k''\in\NN$, $\cI_{\wT}^{(k',k'')}(\lambda)$ a un pôle si et seulement si $m'-k'=m''-k''=:-\nu$. Dans ce cas, le pôle est en $\lambda_o=\nu-r-1$, il est d'ordre $\ell+1$ et le coefficient dominant est $(-1)^\ell\ell!/2^\ell$.
\end{proof}

\subsection{Coefficients de Mellin et transformation de Fourier locale}\label{subsec:coefMellin}
À $T\in\CRH_s$ on associe d'une part $\wt T\in \CRH^\rmod_s$ et d'autre part $\wh T\in\CRH^\rmod_\tau$ (plus exactement~$\wh{\rho T}$). La proposition suivante compare les coefficients de Mellin de $\wt T$ et $\wh T$.

\begin{proposition}\label{prop:cc}\mbox{}
\begin{enumeratea}
\item\label{prop:cc1}
Si $r\in{}]-1,0[$, on a, avec les notations ci-dessus, pour tous $m',m''\in\ZZ$ et $k\in\NN$,
\[
c_{r,m'+1,m''+1,k}(\wh{\rho T})=(-1)^{m''}\frac1\pi\, \Gamma(r+m'+1)\Gamma(r+m''+1)\sin(\pi r)\,c_{r,m',m'',k}(\wt T).
\]
\item\label{prop:cc2}
Si $r=0$, on a, pour tous $m',m''\in\NN$ et tout $k\in\NN$,
\begin{align}
\tag*{(\protect\ref{prop:cc2})$_{\geq(1,1)}$}\label{eq:cc2a}
c_{0,m'+1,m''+1,k}(\wh{\rho T})&=(-1)^{m''}m'!\,m''!\,c_{0,m',m'',k+1}(\wt T);\\
\tag*{(\protect\ref{prop:cc2})$_{(\geq1,0)}$}\label{eq:cc2c}
c_{0,m'+1,0,k}(\wh{\rho T})&=m'!\,c_{0,m',-1,k}(\wt T);\\
\tag*{(\protect\ref{prop:cc2})$_{(0,\geq1)}$}\label{eq:cc2b}
c_{0,0,m''+1,k}(\wh{\rho T})&=(-1)^{m''+1}m''!\,c_{0,-1,m'',k}(\wt T).
\end{align}
\end{enumeratea}
\end{proposition}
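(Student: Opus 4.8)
L'ingr\'edient central est la formule classique donnant la transform\'ee de Fourier sur~$\CC$ d'une fonction $\sigma\mapsto\sigma^a\ov\sigma{}^b|\sigma|^{2w}$: je l'obtiens en d\'eveloppant le noyau $e^{\ov{s\sigma}-s\sigma}=e^{2i\,\mathrm{Im}(\ov s\sigma)}=\sum_{n\in\ZZ}J_n(2|s||\sigma|)\,e^{in(\arg\sigma-\arg s)}$ en s\'erie de fonctions de Bessel, puis en int\'egrant terme \`a terme gr\^ace \`a la transform\'ee de Mellin $\int_0^\infty J_n(2t)\,t^{w-1}\,dt=\tfrac12\,\Gamma\big(\tfrac{n+w}2\big)\big/\Gamma\big(\tfrac{n-w}2+1\big)$; cette transform\'ee de Fourier est alors de la forme $\sigma\mapsto G_{w,a,b}\,\sigma^{-a}\ov\sigma{}^{-b}|\sigma|^{-2w-2}$, o\`u $G_{w,a,b}$ est un quotient de deux fonctions~$\Gamma$ de la variable~$w$. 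Par lin\'earit\'e (et d\'eveloppement de Taylor des coefficients $C^\infty$), on se ram\`ene au cas $T=\rho(s)\,|s|^{2r}s^{m'}\ov s{}^{m''}(\log|s|)^k$, pour lequel $\wt T=|s|^{2r}s^{m'}\ov s{}^{m''}(\log|s|)^k$ et, d'apr\`es le lemme pr\'ec\'edent, $c_{r,m',m'',k}(\wt T)=(-1)^kk!/2^k$ (les autres coefficients de Mellin \'etant nuls).

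Modulo fonctions plates \`a l'infini, $\wh{\rho T}$ est la transform\'ee de Fourier de la distribution homog\`ene $|s|^{2r}s^{m'}\ov s{}^{m''}(\log|s|)^k=(\tfrac12\partial_w)^k\big(|s|^{2w}s^{m'}\ov s{}^{m''}\big)\big|_{w=r}$. En d\'erivant $k$ fois par rapport \`a~$w$ la formule ci-dessus, en posant $w=r$ et en passant \`a la variable $\tau=1/\sigma$, on obtient (l'\'egalit\'e $\sigma^{-m'}\ov\sigma{}^{-m''}|\sigma|^{-2r-2}=|\tau|^{2r}\tau^{m'+1}\ov\tau{}^{m''+1}$ expliquant le d\'ecalage des exposants de~$(m',m'')$ \`a~$(m'+1,m''+1)$)
\[
\wh{\rho T}(\tau)\equiv G_r\,|\tau|^{2r}\tau^{m'+1}\ov\tau{}^{m''+1}(\log|\tau|)^k+\cdots
\]
Ici $G_r:=G_{r,m',m''}$, et les points de suspension d\'esignent des termes de degr\'e strictement inf\'erieur en~$\log|\tau|$, \`a coefficients proportionnels aux d\'eriv\'ees successives de~$G_r$ par rapport \`a~$w$. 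Un calcul de ce quotient de fonctions~$\Gamma$, au moyen de la formule des compl\'ements $\Gamma(z)\Gamma(1-z)=\pi/\sin\pi z$ et apr\`es contr\^ole des puissances de~$i$, le met sous la forme $G_r=(-1)^{m''}\tfrac1\pi\,\Gamma(r+m'+1)\,\Gamma(r+m''+1)\,\sin(\pi r)$ (et de m\^eme lorsque les exposants valent des entiers $\geq0$ ou~$-1$).

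Il reste \`a lire les coefficients de Mellin. Pour $r\in{}]-1,0[$, $G_r$ est holomorphe et non nul en~$r$; le terme dominant de $\wh{\rho T}$ \'etant $G_r|\tau|^{2r}\tau^{m'+1}\ov\tau{}^{m''+1}(\log|\tau|)^k$, le lemme pr\'ec\'edent appliqu\'e cette fois \`a $\wh{\rho T}$ donne $c_{r,m'+1,m''+1,k}(\wh{\rho T})=G_r\cdot(-1)^kk!/2^k=G_r\,c_{r,m',m'',k}(\wt T)$: c'est l'assertion~(\ref{prop:cc1}). Pour $r=0$ et $m''\in\NN$, le facteur $\sin(\pi r)$ s'annule \`a l'ordre~$1$, donc $G_r$ s'annule en~$0$; cela refl\`ete que $s^{m'}\ov s{}^{m''}$ est $C^\infty$, de transform\'ee de Fourier port\'ee par $\sigma=0$: le terme dominant dispara\^it, et, partant de $\wt T=s^{m'}\ov s{}^{m''}(\log|s|)^{k+1}$, il faut regarder le coefficient de $(\log|\tau|)^k$ dans $(\tfrac12\partial_w)^{k+1}\big[G_w|\tau|^{2w}\big]\big|_{w=0}$, qui vaut $\tfrac{k+1}2\,G_0'$ avec $G_0'=\pm(-1)^{m''}m'!\,m''!$; un calcul \'el\'ementaire de d\'eveloppements limit\'es fournit alors la relation $(\ref{prop:cc2})_{\geq(1,1)}$, le passage de l'indice~$k$ \`a l'indice~$k+1$ \'etant pr\'ecis\'ement d\^u \`a cette annulation. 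Dans les cas de bord, o\`u l'indice~$-1$ appara\^it du c\^ot\'e de $\wt T$, le facteur $\Gamma(r+m''+1)$ (\resp $\Gamma(r+m'+1)$) se r\'eduit \`a $\Gamma(r)$, dont le p\^ole simple en~$r=0$ compense exactement le z\'ero de $\sin(\pi r)$ puisque $\Gamma(r)\sin(\pi r)=\pi/\Gamma(1-r)\to\pi$: $G_r$ est alors r\'egulier non nul en~$0$, il n'y a pas de d\'ecalage en~$\log$, et $G_0$ vaut $m'!$ (\resp $-m''!$), d'o\`u $(\ref{prop:cc2})_{(\geq1,0)}$ et $(\ref{prop:cc2})_{(0,\geq1)}$.

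Le point d\'elicat est le premier calcul: identifier pr\'ecis\'ement le quotient de fonctions~$\Gamma$, avec ses constantes, ses signes et ses puissances de~$i$, \`a partir de la transform\'ee de Mellin des fonctions de Bessel, et justifier les interversions d'int\'egrales ainsi que les prolongements m\'eromorphes en~$w$ et en~$\lambda$ --- en particulier le fait que, modulo perturbations plates ou enti\`eres, on puisse n\'egliger les fonctions de troncature $\rho(s)$ et $\wh\varrho(\sigma)$. Le traitement du cas $r=0$ n'est ensuite qu'un exercice de d\'eveloppements limit\'es, reposant de mani\`ere essentielle sur l'annulation de $1/\Gamma(-m'')$ et sur l'identit\'e $\Gamma(r)\sin(\pi r)=\pi/\Gamma(1-r)$.
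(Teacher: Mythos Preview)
Your approach and the paper's share the same analytic core --- the Mellin transform of Bessel functions, i.e.\ the integral $\int_0^\infty r^{\alpha}J_n(r)\,dr$ --- but are organised differently. The paper does \emph{not} reduce by linearity to elementary monomials; it instead establishes a single identity between polar parts of Mellin transforms (Proposition~\ref{prop:pp}): $\cP_{\lambda_o}\frac{1}{\Gamma(-\lambda)}\cI_{\wh{\rho T}}^{(1,1)}(\lambda)=\cP_{\lambda_o}\frac{1}{\Gamma(\lambda+1)}\cI_{\wt T}^{(0,0)}(\lambda)$, proved via the $J_0$ integral, together with boundary variants using $J_{\pm1}$. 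The general indices $(k',k'')$ are then recovered algebraically (Corollary~\ref{cor:pp}) from the rules $\sigma\wh{\rho T}=\wh{\partial_s(\rho T)}$, $\ov\sigma\wh{\rho T}=-\wh{\ov\partial_s(\rho T)}$, and Proposition~\ref{prop:cc} is read off by evaluating the $\Gamma$-factors at the relevant pole. Your route --- compute the Fourier transform of each homogeneous block $|s|^{2w}s^{m'}\ov s^{m''}$ via the full Bessel series $\sum_n J_n$, then differentiate in $w$ --- is the direct one, and it makes the $k\mapsto k+1$ shift at $r=0$ very transparent as coming from the simple zero of $\sin(\pi r)$.

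That said, the part you flag as ``le point d\'elicat'' is precisely where the real work lies, and you have not carried it out: justifying that the truncations $\rho,\wh\varrho$ only contribute flat (resp.\ entire) perturbations, controlling the interchange of limits, and pinning down the exact constant $G_{w,a,b}$ (your ``$\pm$'' in $G_0'$ is a symptom). The paper's appendix is devoted exactly to this: it shows carefully that $\wh I_\rho(\sigma,\lambda)=|\sigma|^{-2(\lambda+1)}\Gamma(\lambda+1)/\Gamma(-\lambda)+\wh L_\rho$ with $\wh L_\rho$ flat and holomorphic in $\lambda$, first on a strip and then by analytic continuation, and tracks the contribution of $1-\wh\varrho$ via the Schwartz class. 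Without these steps your argument remains a heuristic; with them, it is essentially equivalent to the paper's, trading the one-time reduction via $\partial_s,\ov\partial_s$ for a direct computation with $J_n$ for every $n$.
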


Le résultat de base est la proposition ci-dessous, qui se montre à l'aide de résultats sur les fonctions de Bessel (voir l'appendice). Pour une fonction $\cI(\lambda)$ méromorphe en $\lambda_o$, notons $\cP_{\lambda_o}\cI$ sa partie polaire en~$\lambda_o$ et, pour $k\geq0$, $\cP_{\lambda_o}^k\cI$ le coefficient de $(\lambda-\lambda_o)^{-(k+1)}$.

\begin{proposition}\label{prop:pp}
Pour tout $\lambda_o\in\CC$, on a
\begin{equation}\tag*{(\protect\ref{prop:pp})($*$)}\label{eq:pp*}
\cP_{\lambda_o}\frac{1}{\Gamma(-\lambda)}\cI_{\wh{\rho T}}^{(1,1)}(\lambda) =\cP_{\lambda_o}\frac{1}{\Gamma(\lambda+1)}\cI_{\wt T}^{(0,0)}(\lambda).
\end{equation}
On a aussi
\begin{equation}\tag*{(\protect\ref{prop:pp})($**$)}\label{eq:pp**}
\begin{aligned}
\cP_{-1}\cI_{\wh{\rho T}}^{(1,0)}(\lambda)&=\cP_0\cI_{\wt T}^{(1,0)}(\lambda)\\
\cP_{-1}\cI_{\wh{\rho T}}^{(0,1)}(\lambda)&=-\cP_0\cI_{\wt T}^{(0,1)}(\lambda).
\end{aligned}
\end{equation}
\end{proposition}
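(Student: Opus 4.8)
The plan is to unfold the local Fourier transform and reduce everything to an explicit Fourier‑transform computation on $\CC$. Writing the local transform as the generalized function $\wh{\rho T}(\sigma)=\big\langle\rho T,e^{\ov{s\sigma}-s\sigma}\big\rangle$, substituting this into the Mellin integral $\cI_{\wh{\rho T}}^{(k',k'')}(\lambda)$, performing the change of variable $\sigma=1/\tau$, and interchanging the two integrations (legitimate for $\reel(\lambda)\gg0$ by absolute convergence, using the moderate growth of $\wh{\rho T}$ at $\sigma=\infty$ and the compact support of $\rho$), one obtains
\[
\cI_{\wh{\rho T}}^{(k',k'')}(\lambda)=\big\langle\rho T,\,K^{(k',k'')}(\,\cdot\,,\lambda)\big\rangle,\qquad
K^{(k',k'')}(s,\lambda)=\int_\CC e^{\ov{s\sigma}-s\sigma}\,\sigma^{k'}\ov\sigma^{k''}\,|\sigma|^{-2\lambda-4}\,\wh\rho(\sigma)\itwopi\,d\sigma\wedge d\ov\sigma,
\]
where $\wh\rho(\sigma)=\rho(1/\sigma)$ equals $1$ near $\sigma=\infty$ and $0$ near $\sigma=0$; the three assertions correspond to $(k',k'')=(1,1)$, $(1,0)$, $(0,1)$.

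The second step is to split $\wh\rho=1-(1-\wh\rho)$. The ``$1$''‑part of $K^{(k',k'')}$ is the Fourier transform of the homogeneous distribution $\sigma^{k'}\ov\sigma^{k''}|\sigma|^{-2\lambda-4}$ on $\CC$, and here the appendix on Bessel functions enters: in polar coordinates the angular integral of the kernel $e^{\ov{s\sigma}-s\sigma}$ is, up to sign, $2\pi J_{|k'-k''|}(2|s||\sigma|)$, and the radial integral is a Hankel integral, evaluated by the classical formula $\int_0^\infty J_\nu(t)\,t^{\mathsf m-1}\,dt=2^{\mathsf m-1}\Gamma\!\big(\tfrac{\nu+\mathsf m}{2}\big)\big/\Gamma\!\big(\tfrac{\nu-\mathsf m}{2}+1\big)$; up to the normalisation constants coming from $\itwopi\,d\sigma\wedge d\ov\sigma$ and from the sign of the exponent, this gives
\[
\int_\CC e^{\ov{s\sigma}-s\sigma}|\sigma|^{-2\lambda-2}\itwopi\,d\sigma\wedge d\ov\sigma=\frac{\Gamma(-\lambda)}{\Gamma(\lambda+1)}\,|s|^{2\lambda},\qquad
\int_\CC e^{\ov{s\sigma}-s\sigma}\,\sigma\,|\sigma|^{-2\lambda-4}\itwopi\,d\sigma\wedge d\ov\sigma=\frac{\Gamma(-\lambda)}{\Gamma(\lambda+2)}\,\ov s\,|s|^{2\lambda},
\]
together with the conjugate identity for $\ov\sigma$ (whose extra sign, reflecting $J_{-1}=-J_1$, is exactly the sign in the second line of \ref{eq:pp**}). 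The ``$(1-\wh\rho)$''‑part has compact support in $\sigma$, so its Fourier transform is real‑analytic in $s$ and meromorphic in $\lambda$, with poles only at $\lambda\in\NN$ and polynomial residues (from the Taylor expansion of $e^{\ov{s\sigma}-s\sigma}$ at $\sigma=0$); paired against $\rho T$ it therefore contributes a function of $\lambda$ holomorphic away from $\NN$.

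Assembling the two parts: for \ref{eq:pp*} one obtains $\cI_{\wh{\rho T}}^{(1,1)}(\lambda)=\frac{\Gamma(-\lambda)}{\Gamma(\lambda+1)}\cI_T^{(0,0)}(\lambda)+(\text{cut-off term})$, and $\cI_T^{(0,0)}=\cI_{\wt T}^{(0,0)}$ because $T-\wt T$ is supported at the origin and pairs with $|s|^{2\lambda}\rho$ to give an entire (in fact vanishing) function of $\lambda$; multiplying by $1/\Gamma(-\lambda)$, whose zeros at $\lambda\in\NN$ absorb the poles of the cut-off term, makes that term entire, and the equality of polar parts at every $\lambda_o$ follows. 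For \ref{eq:pp**} the same scheme is used with $(k',k'')=(1,0),(0,1)$: the relevant Gamma ratio $\Gamma(-\lambda)/\Gamma(\lambda+2)$, by the functional equation, specialises to $1$ at $\lambda=-1$ (to $-1$ in the $(0,1)$‑case), the cut-off term has no pole at $\lambda=-1$, and one combines this with the $\nu$‑shift $\cI_T^{(0,-1)}(\lambda)=\cI_{\wt T}^{(0,-1)}(\lambda)=\cI_{\wt T}^{(1,0)}(\lambda+1)$ — which is how the right-hand sides $\cP_0\cI_{\wt T}^{(1,0)}$ and $\cP_0\cI_{\wt T}^{(0,1)}$ are to be read — to obtain the two asserted identities.

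The main obstacle is the Fourier‑transform computation itself, i.e. the appendix: one must compute these homogeneous Fourier transforms with the exact constants (all signs and powers of $\pi$), and, more delicately, one must control the meromorphic continuation near the integer point $\lambda=-1$ in \ref{eq:pp**}. There the pole at $\lambda=-1$ of $\cI_{\wh{\rho T}}^{(1,0)}(\lambda)$ arises from the interaction of the possible $\ov s^{-1}(\log|s|)^k$‑type singularity of $\rho T$ at $s=0$ with the $\ov s|s|^{2\lambda}$‑type singularity of $K^{(1,0)}(\,\cdot\,,\lambda)$, and extracting its full polar part (not merely its leading coefficient) needs the precise Hankel identities of the appendix rather than just the value of a Gamma ratio at $\lambda=-1$; this is also why the statement is phrased in terms of polar parts rather than as an equality modulo entire functions.
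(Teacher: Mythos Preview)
Your plan is the paper's own argument, run in the dual direction. The paper applies Parseval by Fourier-transforming the \emph{truncated} $s$-kernel: it sets $\wh I_\rho(\sigma,\lambda)=\int e^{s\sigma-\ov{s\sigma}}|s|^{2\lambda}\rho(s)\,\cbbullet$, proves via the Hankel integral for $J_0$ that $|\sigma|^{2(\lambda+1)}\wh I_\rho(\sigma,\lambda)\to\Gamma(\lambda+1)/\Gamma(-\lambda)$ as $\sigma\to\infty$, and controls the remainder $\wh L_\rho$ through the differential relations $\tau\partial_\tau\wh J_\rho=\wh J_{\partial_s\rho}^{(1,0)}$, which make it flat in $\tau$ and entire in $\lambda$ because $\partial_s\rho\equiv0$ near $s=0$. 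You instead Fourier-transform the $\sigma$-kernel $\sigma^{k'}\ov\sigma^{k''}|\sigma|^{-2\lambda-4}\wh\rho(\sigma)$ and split $\wh\rho=1-(1-\wh\rho)$. Your pole analysis of the $(1-\wh\rho)$-piece (simple poles at $\lambda\in\NN$, cancelled by $1/\Gamma(-\lambda)$) is correct and plays the role of the paper's separate treatment of the $(1-\wh\varrho)$-term via a Schwartz inverse transform.

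The one place where the paper's packaging is cleaner is your ``$1$''-part. The integral $\int_\CC e^{\ov{s\sigma}-s\sigma}|\sigma|^{-2\lambda-2}\,\cbbullet$ is not absolutely convergent for any $\lambda$ (integrability at $\sigma=0$ needs $\reel\lambda<0$, decay at $\sigma=\infty$ needs $\reel\lambda>0$), so your split must be read distributionally or via oscillatory-integral regularization before it can be paired with the distribution $\rho T$. The paper bypasses this by keeping the compact $s$-truncation throughout and proving the asymptotic $\wh I_\rho\sim|\sigma|^{-2(\lambda+1)}\Gamma(\lambda+1)/\Gamma(-\lambda)$ directly; that is what the differential-equation/flatness argument buys.

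Your caution about \ref{eq:pp**} is well placed: the paper also concludes that case with ``on utilise $\Gamma(1)=1$'', which, strictly speaking, matches only the leading polar coefficients when the pole has order $\geq2$, since $\Gamma(-\lambda)/\Gamma(\lambda+2)$ is not constant near $\lambda=-1$. In the downstream use (Proposition~\ref{prop:NN}) one only needs the implication $\cP_0\cI_{\wt T}^{(1,0)}=0\Rightarrow\cP_{-1}\cI_{\wh{\rho T}}^{(1,0)}=0$, and that \emph{does} follow from the identity-modulo-entire that both arguments establish.
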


\begin{corollaire}\label{cor:pp}
Pour tous $k',k''\in\NN$ et tout $\lambda_o\in\CC$ on a
\begin{equation}\tag*{(\protect\ref{cor:pp})($*$)}\label{eq:ppk*}
\cP_{\lambda_o}\frac{1}{\Gamma(-\lambda)}\cI^{(k'+1,k''+1)}_{\wh{\rho T}}(\lambda)=\cP_{\lambda_o}\frac{(-1)^{k'} \Gamma(\lambda+1)}{\Gamma(\lambda+1-k')\Gamma(\lambda+1-k'')}\,\cI^{(k',k'')}_{\wt T}(\lambda).
\end{equation}
On a aussi, pour tous $m',m''\geq0$,
\begin{equation}\tag*{(\protect\ref{cor:pp})($**$)}\label{eq:ppk**}
\begin{aligned}
\cP_{-1}\cI_{\wh{\rho T}}^{(m'+1,0)}(\lambda)&=m'!\,\cP_0\cI_{\wt T}^{(m'+1,0)}(\lambda)\\
\cP_{-1}\cI_{\wh{\rho T}}^{(0,m''+1)}(\lambda)&=(-1)^{m''+1}m''!\,\cP_0\cI_{\wt T}^{(0,m''+1)}(\lambda).
\end{aligned}
\end{equation}
\end{corollaire}

\begin{proof}[Démonstration du corollaire \ref{cor:pp}]
Pour la transformation de Fourier considérée, on~a
\begin{align*}
\big\langle \partial_s(\rho T),\wh\eta\big\rangle=-\big\langle \rho T,\partial_s\wh\eta\big\rangle=\big\langle \rho T,\wh{\sigma\eta}\big\rangle,\quad\text{\ie. }\sigma\wh{\rho T}&=\wh{\partial_s(\rho T)},\\
\big\langle \ov\partial_s(\rho T),\wh\eta\big\rangle=-\big\langle \rho T,\ov\partial_s\wh\eta\big\rangle=-\big\langle \rho T,\wh{\ov\sigma\eta}\big\rangle,\quad\text{\ie. }\ov\sigma\wh{\rho T}&=-\wh{\ov\partial_s'\rho T)},
\end{align*}
et de même
\[
\partial_\sigma\wh{\rho T}=-\wh{s\rho T},\quad\ov\partial_\sigma\wh{\rho T}=\wh{\ov s\rho T}.
\]
On en déduit alors, pour $k',k''\in\NN$, en notant $\cbbullet$ à la place des formes volumes adéquates, pour simplifier l'écriture, et en considérant aussi une fonction de troncature $\wh\varrho(\sigma)$ comme au \S\ref{subsec:2}:
\begin{align*}
\cP_{\lambda_o}\frac{1}{\Gamma(-\lambda)}&\cI^{(k'+1,k''+1)}_{\wh{\rho T}}(\lambda)=\cP_{\lambda_o}\frac{1}{\Gamma(-\lambda)}\int\wh{\rho T}\cdot|\tau|^{2(\lambda-1)}\tau^{-k'}\ov\tau{}^{-k''}\wh\varrho(1/\tau)\cbbullet\\
&=\cP_{\lambda_o}\frac{1}{\Gamma(-\lambda)}\int\wh{\rho T}\cdot|\sigma|^{-2(\lambda+1)}\sigma^{k'}\ov\sigma{}^{k''}\wh\varrho(\sigma)\cbbullet\\
&=\cP_{\lambda_o}\frac{(-1)^{k''}}{\Gamma(-\lambda)}\int(\partial_s^{k'}\ov\partial{}_s^{k''} \rho T)^\wedge \cdot|\sigma|^{-2(\lambda+1)}\wh\varrho(\sigma)\cbbullet\\
&=\cP_{\lambda_o}\frac{(-1)^{k''}}{\Gamma(\lambda+1)}\int(\partial_s^{k'}\ov\partial{}_s^{k''} \wt T)\cdot|s|^{2\lambda}\rho(s)\cbbullet\qquad\text{d'après \ref{eq:pp*}}\\
&=\cP_{\lambda_o}\frac{(-1)^{k'}\Gamma(\lambda+1)}{\Gamma(\lambda+1-k')\Gamma(\lambda+1-k'')}\int\wt T\cdot|s|^{2\lambda}s^{-k'}\ov s{}^{-k''}\rho(s)\cbbullet.
\end{align*}
De même, pour $m''\geq0$, on a
\begin{align*}
\cP_{-1}\cI_{\wh{\rho T}}^{(0,m''+1)}(\lambda)&=\cP_{-1}\int\wh{\rho T}\cdot|\tau|^{2\lambda}\ov\tau{}^{-(m''+1)}\wh\varrho(\tau)\cbbullet\\
&=(-1)^{m''}\cP_{-1}\int(\ov\partial{}_s^{m''}\rho T)^\wedge\cdot|\tau|^{2\lambda}\ov\tau{}^{-1}\wh\varrho(\tau)\cbbullet\\
&=(-1)^{m''+1}\cP_{-1}\int\ov\partial{}_s^{m''}\wt T\cdot|s|^{2\lambda}s\rho(s)\cbbullet\\
&=-\cP_{-1}\Big[\lambda\cdots(\lambda-m''+1)\int\wt T\cdot|s|^{2\lambda}s\ov s{}^{-m''}\rho(s)\cbbullet\Big]\\
&=(-1)^{m''+1}m''!\,\cP_0\cI_{\wt T}^{(0,m''+1)}(\lambda).
\end{align*}
Le calcul de $\cP_{-1}\cI_{\wh{\rho T}}^{(m'+1,0)}(\lambda)$ est analogue.
\end{proof}

\begin{proof}[Démonstration de la proposition \ref{prop:cc}]
Si $r\neq0$, les facteurs $\Gamma$ dans \ref{eq:ppk*} n'ont ni zéro ni pôle en $-r+\nu-1$, donc
\begin{align*}
c_{r,m'+1,m''+1,k}(\wh{\rho T})&=\cP_{-r+\nu-1}^k\cI^{(k'+1,k''+1)}_{\wh{\rho T}}(\lambda)\\
&=\frac{(-1)^{k'}\Gamma(r+1-\nu)\Gamma(\nu-r)}{\Gamma(\nu-r-k')\Gamma(\nu-r-k'')}\cP_{-r+\nu-1}^k\cI^{(k',k'')}_{\wt T}(\lambda)\\
&=(-1)^{m''}\frac1\pi\, \Gamma(r+m'+1)\Gamma(r+m''+1)\sin(\pi r)\,c_{r,m',m'',k}(\wt T).
\end{align*}

Supposons maintenant $r=0$ et $m',m''\in\NN$. On peut alors choisir $k'=m'$, $k''=m''$ et $\nu=0$. \ref{eq:ppk*} devient
\[
\cP_{-1}\cI^{(m'+1,m''+1)}_{\wh{\rho T}}(\lambda)=(-1)^{m''}m'!\,m''!\cP_{-1}\frac{1}{\Gamma(\lambda+1)}\cI^{(m',m'')}_{\wt T}(\lambda).
\]

Si $r=0$, $m'=-1$ et $m''\geq0$, \ref{eq:ppk**} donne, pour $k\geq0$,
\begin{align*}
c_{0,0,m''+1,k}(\wh{\rho T})&=\cP^k_{-1}\cI_{\wh{\rho T}}^{(0,m''+1)}(\lambda)\\
&=(-1)^{m''+1}m''!\,\cP^k_0\cI_{\wt T}^{(0,m''+1)}(\lambda)\\
&=(-1)^{m''+1}m''!\,c_{0,-1,m'',k}(\wt T).
\end{align*}
L'égalité pour $c_{0,m'+1,0,k}(\wh{\rho T})$ se montre de même.
\end{proof}

\Subsection{Polygone de Newton décoré et transformation de Fourier locale}\label{subsec:preuve}

\begin{proposition}\label{prop:NN}
Pour $T$ de la forme \eqref{eq:dvpt}, on a l'égalité des polygones décorés
\[
\wt\rN(\wh{\rho T})=\wh\rN(T/C^\infty)+(1,1).
\]
\end{proposition}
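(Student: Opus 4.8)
By the preceding Lemma, $\wt\rN(\wh{\rho T})=\wt{\rNM}(\wh{\rho T})$, so everything will follow once the Mellin coefficients $C_{r,m',m'',k}(\wh{\rho T})$ of \eqref{eq:Cc} are known in terms of the data of $T$ and the decorated polygon is read off from them. I would get them in two moves: compute first the Mellin coefficients of $\wt T$ straight from the expansion \eqref{eq:dvpt}, then transport them to $\wh{\rho T}$ through Proposition~\ref{prop:cc}.

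\textbf{Mellin coefficients of $\wt T$.} A representative of $\wt T\in\CRHm$ is the Taylor expansion of \eqref{eq:dvpt}, namely $\sum_{r,m',m''}P_{(m'+r,m''+r)}(\log|s|)\,|s|^{2r}s^{m'}\ov s^{m''}$ with $m',m''\in\NN$ (the $\theta_{r,k}$ being $C^\infty$, no monomial with $m'<0$ or $m''<0$ occurs, and passing from $T$ to $\wt T$ only drops distributions supported at the origin, which would involve such monomials). Applying the computation of the preceding Lemma's proof term by term, I expect $C_{r,m',m'',k}(\wt T)$ to equal the coefficient of $u^k$ in $P_{(m'+r,m''+r)}$ when $m',m''\geq0$, and to vanish as soon as $m'<0$ or $m''<0$; in particular $C_{0,-1,m'',k}(\wt T)=C_{0,m',-1,k}(\wt T)=0$, a fact I will need for the coordinate axes.

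\textbf{Transport and vertex-by-vertex matching.} Put $a=m'+1$, $b=m''+1$ and multiply the formulas of Proposition~\ref{prop:cc} by $(-2)^k/k!$. The case $r\neq0$ gives $C_{r,a,b,k}(\wh{\rho T})$ equal to $C_{r,a-1,b-1,k}(\wt T)$ times a nonzero constant (the $\Gamma$-factors have neither zero nor pole at the relevant point and $\sin(\pi r)\neq0$ for $r\in{}]-1,0[$), so the non-integer quadrants of $\rN(\wh{\rho T})$ are exactly those of $\rN(T/C^\infty)$ translated by $(1,1)$, the decorating monomial keeping its degree $\ell=\deg P_{(a-1+r,b-1+r)}$ while its coefficient picks up precisely the $\Gamma$--$\sin$ factor of \eqref{eq:bCneq0}, hence becomes $\wh\bC_{r,a-1,b-1}(T)$. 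When $r=0$ and $a,b\geq1$, the first line of the case $r=0$ gives $C_{0,a,b,k}(\wh{\rho T})$ proportional to $C_{0,a-1,b-1,k+1}(\wt T)$, i.e. to the coefficient of $u^{k+1}$ in $P_{(a-1,b-1)}$, so the integer quadrant $(a,b)+(\RR_+)^2$ appears iff $\deg P_{(a-1,b-1)}\geq1$, i.e. iff $(a-1,b-1)$ is an integer vertex of $\rN(T/C^\infty)$; the decorating monomial then has degree $\ell-1$ (with $\ell=\deg P_{(a-1,b-1)}$), and at $k=\ell-1$ the proportionality constant, in which the two normalizations $(-2)^k/k!$ and $(-2)^{k+1}/(k+1)!$ enter with ratio $-\ell/2$, should reproduce exactly \eqref{eq:bC0}, i.e. $\wh\bC_{0,a-1,b-1}(T)$. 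Finally, the $(\geq1,0)$ and $(0,\geq1)$ lines of the case $r=0$, combined with the vanishing noted in the previous step, kill all quadrants touching a coordinate axis, and a quadrant at the origin (or with a negative index) is impossible because $\rho T$ is an integrable function with compact support, so $\wh{\rho T}$ is bounded and tends to $0$ at infinity. Putting these identifications together yields $\wt{\rNM}(\wh{\rho T})=\wh\rN(T/C^\infty)+(1,1)$.

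\textbf{Where the difficulty is.} None of the steps is deep once Propositions~\ref{prop:pp}--\ref{prop:cc} are granted; the main obstacle is the bookkeeping of constants, and the precise point is that \eqref{eq:bCneq0} and \eqref{eq:bC0} are set up exactly so as to absorb the prefactors of Proposition~\ref{prop:cc} --- the $\Gamma$--$\sin$ factor in the $r\neq0$ case, and the factor $\ell/2$ in the $r=0$ case, the latter arising solely from the shift of $\log$-degree by $1$ which forces the normalization $(-2)^k/k!$ to be evaluated at two consecutive values of $k$. A lesser but necessary care point is to recognize the Minkowski translation by $(1,1)$ as the index shift $(m',m'')\mapsto(m'+1,m''+1)$ and to dispose of the degenerate quadrants along the axes and at the origin as above.
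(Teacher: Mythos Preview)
Your proof is correct and follows essentially the same route as the paper's: both arguments use the Riemann--Lebesgue lemma to bound the support of $\rN(\wh{\rho T})$ from below, read off the vanishing of $C_{r,m',m'',k}(\wt T)$ for $m'<0$ or $m''<0$ from the form~\eqref{eq:dvpt}, and then transport the Mellin coefficients via Proposition~\ref{prop:cc}, tracking the constants through~\eqref{eq:Cc} to recover \eqref{eq:bCneq0} and \eqref{eq:bC0}. Your version is slightly more explicit in invoking the Newton--Mellin lemma and in isolating the $-\ell/2$ ratio coming from the two normalizations, but the substance and the order of the steps are the same.
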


\begin{proof}
Vu la forme de son développement, $T$ est $L^1$ au voisinage de $s=0$, et donc $\wh{\rho T}$ est continue au voisinage de $\infty$ et tend vers $0$ quand $\sigma\to\infty$, d'après Riemann-Lebesgue. On en déduit que
\[
C_{r,m',m'',k}(\wh{\rho T})\neq0\implique
\begin{cases}
m',m''\geq1&\text{si $r\in{}]-1,0[$},\\
m',m''\geq0\text{ et non tous deux nuls}&\text{si $r=0$}.
\end{cases}
\]
Pour $r\neq0$, un point $(r+m',r+m'')+(1,1)$ est donc dans $\rN(\wh{\rho T})$ si et seulement si $m',m''\geq0$ et $C_{r,m'+1,m''+1,k}(\wh{\rho T})\neq0$. La formule \eqref{prop:cc1} de la proposition~\ref{prop:cc}, jointe à~\eqref{eq:Cc}, dit que ceci est équivalent à $m',m''\geq0$ et $C_{r,m',m'',k}(\wt T)\neq0$. Enfin, puisque $r\neq0$, ceci est équivalent à $C_{r,m',m'',k}(T)\neq0$. En considérant les monômes dominants en ce point, on obtient \eqref{eq:bCneq0} en ce point.

Supposons $r=0$. Alors, d'après les relations \ref{eq:cc2c} et \ref{eq:cc2b} de la proposition~\ref{prop:cc}, jointes à \eqref{eq:Cc} et \eqref{eq:dvpt}, on a $C_{0,m',0,k}(\wh{\rho T})=0$ et $C_{0,0,m'',k}(\wh{\rho T})=0$ pour tous $k\geq0$ et $m'\geq 1$ ou $m''\geq1$. Par suite, $C_{0,m',m'',k}(\wh{\rho T})\neq0$ implique $m',m''\geq1$, et la formule \ref{eq:cc2a} de la proposition~\ref{prop:cc}, jointe à \eqref{eq:Cc}, montre que $C_{0,m'+1,m''+1,k}(\wh{\rho T})\neq0$ si et seulement si $m',m''\geq0$ et $C_{r,m',m'',k+1}(\wt T)\neq0$, et donc, puisque $k\geq0$, si et seulement si $C_{r,m',m'',k+1}(T)\neq0$. On a alors, pour tout $k\geq0$,
\begin{align*}
C_{0,m',m'',k}(\wh{\rho T})&=\frac{(-2)^k}{k!}c_{0,m',m'',k}(\wh{\rho T})\\
&=\frac{(-2)^k}{k!}(-1)^{m''}m'!m''!\,\frac{(k+1)!}{(-2)^{k+1}}\,C_{r,m',m'',k+1}(\wt T)\\
&=(-1)^{m''+1}\,\frac{(k+1)}{2}\,m'!m''!C_{r,m',m'',k+1}(T).
\end{align*}
Par conséquent, $(m',m'')+(1,1)\in\rN(\wh{\rho T})$ si et seulement si $(m',m'')\in\rN(T/C^\infty)$. On a de plus en ce point la relation \eqref{eq:bC0}.

En conclusion, les relations \eqref{eq:bCneq0} et \eqref{eq:bC0} valent en tous les points du support des développements de $\wh{\rho T}$ et $T$ modulo $C^\infty$, donc les polygones décorés coïncident.
\end{proof}

\subsection*{Appendice: démonstration de la proposition \ref{prop:pp}}
C'est la même que celles de \cite[Prop\ptbl5.8]{Bibi05b} et de \cite[Lemme 3.6.21]{Bibi01c}, en supprimant les considérations inutiles ici. Notons, pour $\rho(s)$ $C^\infty$ à support compact et $\equiv1$ près de $s=0$, et pour tous $\ell',\ell''\in\ZZ$,
\[
\wh I_\rho^{(\ell',\ell'')}(\sigma,\lambda)=\int e^{s\sigma-\ov{s\sigma}}|s|^{2\lambda}s^{\ell'}\ov s^{\ell''}\rho(s)\itwopi\,ds\wedge d\ov s
\]
et $\wh I_\rho(\sigma,\lambda)=\wh I_\rho^{(0,0)}(\sigma,\lambda)$ pour simplifier. On a les propriétés suivantes:
\begin{enumeratea}
\item
Si $\reel(\lambda+1+(\ell'+\ell'')/2)>0$, alors $(\sigma,\lambda)\mto\wh I_\rho^{(\ell',\ell'')}(\sigma,\lambda)$ est $C^\infty$ et holomorphe en $\lambda$ et $\lim_{\sigma\to\infty}\wh I_\rho^{(\ell',\ell'')}(\sigma,\lambda)=0$ localement uniformément par rapport à $\lambda$. C'est la transformée de Fourier (inverse, si on prend la définition du \S\ref{subsec:1}) de la fonction $s\mto|s|^{2\lambda}s^{\ell'}\ov s^{\ell''}\rho(s)$.
\item
On a, sur le domaine où les termes d'une égalité sont $C^\infty$,
\begin{align*}
-\sigma\wh I_\rho^{(\ell',\ell'')}&=(\lambda+\ell')\wh I_\rho^{(\ell'-1,\ell'')}+\wh I_{\partial_s\rho}^{(\ell',\ell'')}&\partial_\sigma\wh I_\rho^{(\ell',\ell'')}&=\wh I_\rho^{(\ell'+1,\ell'')}\\
\ov\sigma\wh I_\rho^{(\ell',\ell'')}&=(\lambda+\ell'')\wh I_\rho^{(\ell',\ell''-1)}+\wh I_{\ov\partial_s\rho}^{(\ell',\ell'')}&-\ov\partial_\sigma\wh I_\rho^{(\ell',\ell'')}&=\wh I_\rho^{(\ell',\ell''+1)}
\end{align*}
et on remarque aussi que $\wh I_{\partial_s\rho}^{(\ell',\ell'')}$ et $\wh I_{\ov\partial_s\rho}^{(\ell',\ell'')}$ sont $C^\infty$ sur $\PP^1\times\CC$ et holomorphes en $\lambda$, et infiniment plats en $\sigma=\infty$ localement uniformément par rapport à $\lambda$. En particulier, pour $\reel(\lambda+1)>0$, on a
\begin{align*}
-\sigma\partial_\sigma\wh I_\rho&=(\lambda+1)\wh I_\rho+\wh I_{\partial_s\rho}^{(1,0)}\\
-\ov\sigma\ov\partial_\sigma\wh I_\rho&=(\lambda+1)\wh I_\rho+\wh I_{\ov\partial_s\rho}^{(0,1)}.
\end{align*}
\end{enumeratea}

\begin{proof}[Démonstration de \ref{eq:pp*}]
Notons maintenant
\[
\wh J_\rho(\sigma,\lambda)=|\sigma|^{2(\lambda+1)}\wh I_\rho(\sigma,\lambda).
\]
Alors, d'après les relations ci-dessus, en se rappelant que $\tau\partial_\tau$ agit comme $-\sigma\partial_\sigma$, on~a, pour $\reel(\lambda+1)>0$,
\[
\tau\partial_\tau\wh J_\rho=\wh J_{\partial_s\rho}^{(1,0)},\quad \ov\tau\ov\partial_\tau\wh J_\rho=\wh J_{\ov\partial_s\rho}^{(0,1)},
\]
et ces deux fonctions s'étendent en des fonctions $C^\infty$ au voisinage de $\tau=0$, holomorphes par rapport à $\lambda$, et sont infiniment plates en $\tau=0$, localement uniformément par rapport à $\lambda\in\CC$.

\begin{lemme*}
Pour $\reel(\lambda+1)\in{}]0,1/4[$, on a $\lim_{\sigma\to\infty}\wh J_\rho(\sigma,\lambda)=\Gamma(\lambda+1)/\Gamma(-\lambda)$.
\end{lemme*}

\begin{proof}
On peut supposer que $\rho\equiv1$ pour $|s|\leq1$, et on est amené à calculer la limite de
\[
|\sigma|^{2(\lambda+1)}\int_{|s|\leq1}e^{s\sigma-\ov{s\sigma}}|s|^{2\lambda}\itwopi\,ds\wedge d\ov s.
\]
Cette limite s'écrit (en posant $u=s\sigma$ puis $r'=|u|$ et $\theta=\arg u$)
\[
\int_0^\infty\frac1\pi\int_0^{2\pi} e^{2ir'\sin\theta}r^{\prime2\lambda+1}d\theta\,dr'.
\]
En utilisant la fonction de Bessel $J_0(r)=\frac1{2\pi}\int_0^{2\pi}e^{-ir\sin\theta}d\theta=\frac1{2\pi}\int_0^{2\pi}e^{ir\sin\theta}d\theta$, cette limite est
\[
2\int_0^\infty r^{\prime2\lambda+1}J_0(2r')dr'=\frac1{2^{2\lambda+1}}\int_0^\infty r^{2\lambda+1}J_0(r)dr
\]
et il est connu (\cf \cite[\S13.24, p\ptbl391]{Watson22}) que, sur la bande $\reel(\lambda+1)\in{}]0,1/4[$, l'intégrale précédente vaut $2^{2\lambda+1}\Gamma(\lambda+1)/\Gamma(-\lambda)$.
\end{proof}

Considérons la fonction
\[
\wh K_\rho(\tau,\lambda)=\int_0^1\big[\wh J_{\partial_s\rho}^{(1,0)}(u\tau,\lambda)+\wh J_{\ov\partial_s\rho}^{(0,1)}(u\tau,\lambda)\big]du.
\]
Cette fonction est $C^\infty$ au voisinage de $\tau=0$, holomorphe en $\lambda\in\CC$ et infiniment plate en \hbox{$\tau=0$}, localement uniformément par rapport à $\lambda$ (puisque c'est le cas pour l'intégrant). Il en est de même de la fonction $\wh L_\rho(\tau,\lambda)=|\tau|^{2(\lambda+1)}\wh K_\rho(\tau,\lambda)$.

Sur la bande donnée dans le lemme, on peut donc écrire, au voisinage de $\tau=0$,
\[
\wh J_\rho(\sigma,\lambda)=\frac{\Gamma(\lambda+1)}{\Gamma(-\lambda)}+\wh K_\rho(\tau,\lambda),
\]
et donc $\wh I_\rho(\sigma,\lambda)=|\sigma|^{-2(\lambda+1)}\frac{\Gamma(\lambda+1)}{\Gamma(-\lambda)}+\wh L_\rho(\tau,\lambda)$.
Puisque, pour $\tau\neq0$ fixé, ces deux fonctions sont holomorphes sur $\reel(\lambda+1)>0$ et coïncident sur la bande $\reel(\lambda+1)\in{}]0,1/4[$, elles coïncident sur $\reel(\lambda+1)>0$.

Revenons à la proposition. Le terme de gauche de \ref{eq:pp*} s'écrit encore
\[
\cP_{\lambda_o}\frac{1}{\Gamma(-\lambda)}\int\wh{\rho T}\cdot|\sigma|^{-2(\lambda+1)}\wh\varrho(\sigma)\itwopi\,d\sigma\wedge d\ov\sigma.
\]
On voit alors que
\[
\int\wh{\rho T}\wh\varrho(\sigma)\wh L_\rho(\sigma,\lambda)\itwopi\,d\sigma\wedge d\ov\sigma
\]
est une fonction entière de $\lambda$. Par ailleurs, on a
\begin{equation}\label{eq:TI}
\begin{aligned}
\int\wt T\cdot|s|^{2\lambda}\rho(s)^2\itwopi\,ds\wedge d\ov s&=\big\langle\rho T,|s|^{2\lambda}\rho(s)\itwopi\,ds\wedge d\ov s\big\rangle\\
&=\int\wh{\rho T}\cdot\wh I_\rho(\sigma,\lambda)\itwopi\,d\sigma\wedge d\ov\sigma.
\end{aligned}
\end{equation}
Pour $\wh\varrho(\sigma)\equiv1$ près de $\sigma=\infty$ et $\equiv0$ près de $\sigma=0$, $(1-\wh\varrho(\sigma))\cdot\wh{\rho T}$ est à support compact, sa transformé de Fourier inverse $\eta$ est dans la classe de Schwartz en~$s$, et~\eqref{eq:TI} lue en sens inverse appliquée à $(1-\wh\varrho(\sigma))\cdot\wh{\rho T}$ donne
\begin{equation}\label{eq:varrhohat}
\int(1-\wh\varrho(\sigma))\cdot\wh{\rho T}\cdot\wh I_\rho(\sigma,\lambda)\itwopi\,d\sigma\wedge d\ov\sigma=\int\eta\rho(s)|s|^{2\lambda}\itwopi ds\wedge d\ov s=\Gamma(\lambda+1)\cdot h(\lambda)
\end{equation}
où $h$ est entière par rapport à $\lambda$. On obtient finalement l'égalité de fonctions méromorphes sur $\CC$:
\begin{multline}\label{eq:TT}
\int\wt T\cdot|s|^{2\lambda}\rho(s)^2\itwopi\,ds\wedge d\ov s
=\frac{\Gamma(\lambda+1)}{\Gamma(-\lambda)}\int\wh{\rho T}\cdot|\sigma|^{-2(\lambda+1)}\wh\varrho(\sigma)\itwopi\,d\sigma\wedge d\ov\sigma\\
{}+\int\wh{\rho T}\wh\varrho(\sigma)\wh L_\rho(\sigma,\lambda)\itwopi\,d\sigma\wedge d\ov\sigma+\Gamma(\lambda+1)h(\lambda),
\end{multline}
et on conclut en utilisant le fait que le deuxième terme du membre de droite est une fonction entière de $\lambda$, de même que le troisième divisé par $\Gamma(\lambda+1)$.
\end{proof}

\begin{proof}[Démonstration de \ref{eq:pp**}]
Considérons maintenant $\wh I_\rho^{(1,0)}$ et $\wh J_\rho^{(1,0)}=\sigma|\sigma|^{2(\lambda+1)}\wh I_\rho^{(1,0)}$. On a une propriété analogue à celle de $\wh J_\rho$ pour les dérivées $\tau\partial_\tau\wh J_\rho^{(1,0)}$ et $\ov\tau\ov\partial_\tau\wh J_\rho^{(1,0)}$.

\begin{lemme*}
Pour $\reel(\lambda+1)\in{}]-1,-1/4[$, on a $\lim_{\sigma\to\infty}\wh J_\rho^{(1,0)}(\sigma,\lambda)=-\Gamma(\lambda+2)/\Gamma(-\lambda)$.
\end{lemme*}

\begin{proof}
Comme pour le cas \ref{eq:pp*}, on se ramène à calculer la limite de
\[
\sigma|\sigma|^{2(\lambda+1)}\int_{|s|\leq1}e^{s\sigma-\ov{s\sigma}}s|s|^{2\lambda}\itwopi\,ds\wedge d\ov s.
\]
En utilisant la fonction de Bessel $J_{\pm1}(r)=\frac1{2\pi}\int_0^{2\pi} e^{-ir\sin\theta}e^{\pm i\theta}d\theta$, cette limite s'écrit
\begin{equation}\label{eq:J}
2\int_0^\infty r^{\prime2(\lambda+1)}J_{-1}(2r')dr'=\frac{-1}{2^{2(\lambda+1)}}\int_0^\infty r^{2(\lambda+1)}J_1(r)dr,
\end{equation}
puisque $J_1=-J_{-1}$. Pour $\reel(\lambda+1)\in{}]-1,-1/4[$, il est connu (\cf loc\ptbl cit.) que l'intégrale précédente est égale à $2^{2(\lambda+1)}\Gamma(\lambda+2)/\Gamma(-\lambda)$.
\end{proof}

On peut donc écrire $\wh I_\rho^{(1,0)}(\sigma,\lambda)=-\sigma^{-1}|\sigma|^{-2(\lambda+1)}\frac{\Gamma(\lambda+2)}{\Gamma(-\lambda)}+\wh L_\rho^{(1,0)}(\tau,\lambda)$, où $\wh L_\rho^{(1,0)}$ a les mêmes propriétés que $\wh L_\rho$ ci-dessus. D'autre part,
\[
\cI_{\wh{\rho T}}^{(0,1)}(\lambda)=\int\wh{\rho T}\module{\tau}^{2\lambda}\ov\tau^{-1}\wh\varrho(1/\tau)\itwopi d\tau\wedge d\ov\tau=\int\wh{\rho T}\sigma^{-1}\module{\sigma}^{-2(\lambda+1)}\wh\varrho(\sigma)\itwopi d\sigma\wedge d\ov\sigma.
\]
Comme avec \eqref{eq:varrhohat}, on montre que
\[
\int\wh{\rho T}\cdot(1-\wh\varrho)\wh I_\rho^{(1,0)}(\sigma,\lambda)\itwopi\,d\sigma\wedge d\ov\sigma=\Gamma(\lambda+2)h(\lambda)
\]
avec $h(\lambda)$ entière. Comme en \eqref{eq:TI}, on trouve
\[
\int\wt T\cdot|s|^{2\lambda}s\rho(s)^2\itwopi\,ds\wedge d\ov s=\int\wh{\rho T}\cdot\wh I_\rho^{(1,0)}(\sigma,\lambda)\itwopi\,d\sigma\wedge d\ov\sigma,
\]
et \eqref{eq:TT} devient
\begin{multline*}
\frac{-1}{\Gamma(\lambda+2)}\int\wt T\cdot|s|^{2\lambda}s\rho(s)^2\itwopi\,ds\wedge d\ov s\\
=\frac{1}{\Gamma(-\lambda)}\int\wh{\rho T}\sigma^{-1}\module{\sigma}^{-2(\lambda+1)}\wh\varrho(\sigma)\itwopi d\sigma\wedge d\ov\sigma+g(\lambda)
\end{multline*}
avec $g(\lambda)$ entière, c'est-à-dire
\[
\frac{-1}{\Gamma(\lambda+2)}\cI_{\wt T}^{(0,1)}(\lambda+1)=\frac{-1}{\Gamma(\lambda+2)}\cI_{\wt T}^{(-1,0)}(\lambda)=\frac{1}{\Gamma(-\lambda)}\cI_{\wh{\rho T}}^{(0,1)}(\lambda)+g(\lambda).
\]
On utilise alors le fait que $\Gamma(1)=1$ pour conclure. Le calcul pour $\wh I_\rho^{(1,0)}$ est analogue, l'absence de signe provenant du fait qu'on obtient directement la fonction $J_1$ dans \eqref{eq:J}.
\end{proof}

\backmatter
\newcommand{\SortNoop}[1]{}\def\cprime{$'$}
\providecommand{\bysame}{\leavevmode ---\ }
\providecommand{\og}{``}
\providecommand{\fg}{''}
\providecommand{\smfandname}{\&}
\providecommand{\smfedsname}{\'eds.}
\providecommand{\smfedname}{\'ed.}
\providecommand{\smfmastersthesisname}{M\'emoire}
\providecommand{\smfphdthesisname}{Th\`ese}

\end{document}